\documentclass{amsart}

\usepackage{amssymb}

\usepackage[T1]{fontenc}
\usepackage{bbm}
\usepackage{graphicx}
\usepackage{latexsym}
\usepackage{enumerate}
\usepackage{graphicx}

\newtheorem{theorem}{Theorem}[section]
\newtheorem{lemma}[theorem]{Lemma}

\theoremstyle{definition}

\theoremstyle{remark}

\newcommand\E{\mathbb{E}}
\newcommand\R{\mathbb{R}}

\numberwithin{equation}{section}

\begin{document}
\title[Weighted inequalities]{On the weighted weak-type constant\\ for the dyadic square function}

\author{Adam Os\k{e}kowski}
\address{Faculty of Mathematics, Informatics and Mechanics, University of Warsaw,
Banacha 2, 02-097 Warsaw, Poland}
\email{A.Osekowski@mimuw.edu.pl}

\subjclass[2010]{Primary: 42B37, 42A61; Secondary: 60G42.}
\keywords{weight, square function, dyadic, martingale, weak-type.}

\begin{abstract}
We present an explicit construction of examples showing that the
estimate  
$ \|S(f)\|_{L^2(w)\to L^{2,\infty}(w)}\lesssim \big([w]_{A_2} 
\log(1 + [w]_{A_2})\big)^{1/2}$  for the dyadic square function is
sharp in terms of the characteristic $[w]_{A_2}$.
\end{abstract}

\maketitle

\section{Introduction}
The purpose of the paper is to address an open problem in the one-weight theory, arising in the context of weak-type $(2,2)$ estimates for square functions. To present the result from an appropriate perspective, we will first discuss related statements for maximal and singular integral operators.

\smallskip

Let $M$ denote the Hardy--Littlewood maximal operator on $\mathbb{R}^d$, defined by
\[
Mf(x)=\sup_{Q\ni x}\frac{1}{|Q|}\int_Q |f(y)|\,\mathrm{d}y,
\]
where the supremum is taken over all cubes $Q\subset\mathbb{R}^d$ containing $x$ and having sides parallel to the coordinate axes. In 1971, Fefferman and Stein \cite{F} established the weak-type $(1,1)$ estimate
\begin{equation}\label{FS}
\|Mf\|_{L^{1,\infty}(w)}
\lesssim_d \|f\|_{L^1(Mw)}.
\end{equation}
Here $w$ is an arbitrary weight on $\mathbb{R}^d$, that is, a nonnegative measurable function; any such weight can be identified with the Borel measure \(w(E)=\int_E w(x)\,\mathrm{d}x \). Furthermore, for $1\leq p<\infty$, $\|f\|_{L^p(w)}$ and $\|f\|_{L^{p,\infty}(w)}$ denote the associated strong and weak $L^p$ norms, given by $\left(\int_{\R^d}|f|^pw\right)^{1/p}$ and $\sup_{\lambda>0}(\lambda w(|f|\geq \lambda))^{1/p}$, respectively.

\smallskip

Fefferman and Stein used \eqref{FS} to obtain a class of sharp estimates for vector-valued maximal operators. A few years later, Muckenhoupt and Wheeden conjectured that the estimate \eqref{FS} should remain valid if the maximal operator on the left-hand side is replaced by an arbitrary Calder\'on--Zygmund operator $T$. More precisely, they conjectured that
\begin{equation}\label{MW}
\|Tf\|_{L^{1,\infty}(w)}\lesssim_{d,T}\|f\|_{L^1(Mw)},
\end{equation}
where $C_{d,T}$ is a finite constant depending only on the dimension and on the operator $T$. This problem remained open for almost forty years. In 2012, Reguera and Thiele \cite{RT} disproved it, by showing that the estimate fails for the Hilbert transform; an earlier result of Reguera \cite{R} showed that the corresponding dyadic version of the conjecture is false as well.

A weaker form of the Muckenhoupt--Wheeden conjecture has also attracted considerable attention. Recall that a weight $w$ on $\mathbb{R}^d$ belongs to the Muckenhoupt class $A_1$ if there exists a constant $c>0$ such that
\begin{equation}\label{A1}
Mw\leq cw
\qquad\text{a.e. on }\mathbb{R}^d.
\end{equation}
The smallest admissible constant is denoted by $[w]_{A_1}$ and is called the $A_1$ characteristic of $w$. If \eqref{MW} were true, then, for every $A_1$ weight $w$, we would have
\[
\|Tf\|_{L^{1,\infty}(w)} \lesssim_{d,T} [w]_{A_1} \|f\|_{L^1(w)}.
\]
Indeed, this follows immediately from the pointwise inequality
$Mw\leq[w]_{A_1}w$. However, this weaker statement is also false, even for the Hilbert transform, and an analogous negative result holds in the dyadic setting; see \cite{NRVV,NRVV2}.

On the positive side, Lerner et al. \cite{LOP} established the following estimate: for every Calder\'on--Zygmund operator $T$,
\begin{equation}\label{sss}
\|Tf\|_{L^{1,\infty}(w)} \lesssim_{d,T} [w]_{A_1}
\log\big(1+[w]_{A_1}\big) \|f\|_{L^1(w)}.
\end{equation}
Furthermore, it turns out that the $L\log L$ dependence on the $A_1$ characteristic in \eqref{sss} is optimal (cf. \cite{LNO}). For the sake of completeness,  let us mention the versions concerning weighted weak-type $(p,p)$ bounds, for $1<p<\infty$: as shown by Hyt\"onen et al. \cite{H0}, we have
$$ \|Tf\|_{L^{p,\infty}(w)} \lesssim_{d,T}
[w]_{A_p} \|f\|_{L^p(w)}$$
and the linear dependence on the characteristics is optimal. Here the $A_p$ characteristics is given by
\begin{equation}\label{Ap}
 [w]_{A_p}=\sup \left(\frac{1}{|Q|}\int_Q w\right)\left(\frac{1}{|Q|}\int_Q w^{1/(1-p)}\right)^{p-1},
\end{equation}
where the supremum is taken over all cubes $Q$ contained in $\R^d$, with sides parallel to the axes.

\smallskip

Analogous questions can be posed for other classes of operators. In this paper, we focus on the dyadic square function on $[0,1)$, for which these questions have received considerable attention in the literature. Let $\mathcal{D}$ denote the standard dyadic lattice of intervals on $[0,1)$, organized into generations
$$\mbox{$\mathcal{D}_0=\{[0,1)\}, \quad \mathcal{D}_1=\{[0,\frac{1}{2}),[\frac{1}{2},1)\}, \quad  \mathcal{D}_2=\big\{[0,\frac{1}{4}),[\frac{1}{4},\frac{1}{2}),[\frac{1}{2},\frac{3}{4}),[\frac{3}{4},1)\big\},\, \ldots$}.$$
 For an integrable function $f$ on $[0,1)$, let $(f_n)_{n\geq 0}$ be the associated dyadic martingale, given by 
$$ f_n=\mathbb{E}\big(f\,|\,\sigma(\mathcal{D}_n)\big)=\sum_{I\in \mathcal{D}_n} \chi_I\frac{1}{|I|}\int_I f,\qquad n=0,\,1,\,2,\,\ldots.$$ Then the dyadic square function of $f$ is defined by
\[
S f(x)
=
\left( |f_0|^2+
\sum_{n\geq 0} |f_{n+1}-f_n|^2
\right)^{1/2}.
\]
For a fixed $1\leq p<\infty$, we define the dyadic Muckenhoupt classes
$A_p^d$ on $[0,1)$ by suitably modifying the definitions in
\eqref{A1} and \eqref{Ap}. More precisely, a positive integrable
function $w$ belongs to $A_1^d$ if
\[
M^d w \leq c w
\qquad\text{almost everywhere on }[0,1),
\]
where $M^d$ denotes the dyadic maximal operator on $[0,1)$. The smallest admissible constant $c$ is called the $A_1^d$ characteristic
of $w$ and is denoted by $[w]_{A_1^d}$. For $1<p<\infty$, we say that a positive integrable function $w$ belongs to $A_p^d$ if the quantity
in \eqref{Ap}, with the supremum taken over all dyadic subintervals $Q\subseteq[0,1)$, is finite.

\smallskip

Now, one can ask about the weighted weak-type bounds for $S$, with the sharp dependence on the $A_p$ characteristics of the involved weight. The following result was obtained by Lacey and Scurry \cite{LS} (for $p<2$) and H\"ytonen-Li \cite{HL} (for $p\neq 2$). 
\begin{theorem}
Suppose that $1\leq p<\infty$, $p\neq 2$. Then we have
$$ \|S(f)\|_{L^{p,\infty}(w)}\lesssim [w]_{A_p}^{\max\{1/p,1/2\}} \|f\|_{L^p(w)}.$$
The exponent $\max\{1/p,1/2\}$ is the best possible.
\end{theorem}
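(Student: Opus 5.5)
The statement has two halves: an upper bound for every $p\neq 2$ and a sharpness claim. The plan is to obtain the upper bound from the known strong-type and weak-type $(1,1)$ bounds together with extrapolation/interpolation, and the sharpness from explicit examples.

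\emph{Upper bound for $p>2$.} I would first invoke the sharp strong-type bound $\|Sf\|_{L^p(w)}\lesssim [w]_{A_p}^{\max\{1/2,1/(p-1)\}}\|f\|_{L^p(w)}$. For $p>3$ this already gives the exponent $1/2$. For $2<p\le 3$ it is not sharp enough, so I would argue differently. I would use sparse domination of $S$: $(Sf)^2\lesssim \sum_{Q\in\mathcal S}\langle |f|\rangle_Q^2\chi_Q$ for some sparse family $\mathcal S$. The weak $L^p$ bound of $Sf$ then reduces to the weak $L^{p/2}$ bound of the sparse operator $A(g)=\sum_{Q\in\mathcal S}\langle g\rangle_Q\chi_Q$ applied to $g=|f|^2$, in a form that keeps the averages squared. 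Since $p/2>1$, the $A(g)$ step can be handled with the known weak-type bound $[w]_{A_q}$ for sparse operators, combined with duality testing on sets $E=\{Sf>\lambda\}$. Writing $\sigma=w^{1-p'}$, the task is to show
\[
\sum_{Q\in\mathcal S}\langle |f|\rangle_Q^2\, w(E\cap Q)\lesssim [w]_{A_p}^{2/p}\|f\|_{L^p(w)}^2\, w(E)^{1-2/p}.
\]
I would prove this by a stopping-time (principal cubes) argument on $\langle|f|\sigma^{-1}\rangle^\sigma_Q$. The $A_p$ factor enters only once per cube, through $\langle w\rangle_Q\langle\sigma\rangle_Q^{p-1}$, and Hölder is used with exponents $p/2$ and $(p/2)'$. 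The point to check is that no $A_\infty$ factor of $\sigma$ appears at weak level.

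\emph{Upper bound for $1\le p<2$.} I would prove the endpoint $p=1$, namely $\|Sf\|_{L^{1,\infty}(w)}\lesssim[w]_{A_1}\|f\|_{L^1(w)}$, via a weighted Calderón--Zygmund decomposition at height $\lambda$ relative to Lebesgue measure. The bad part has square function supported in the union of the selected intervals, whose $w$-measure is at most $[w]_{A_1}\lambda^{-1}\|f\|_{L^1(w)}$. For the good part I would use the $L^2(w)$ bound with $w\in A_1\subset A_2$, and because $g\le 2\lambda$, Chebyshev costs only $[w]_{A_1}$. For $1<p<2$ I would use the same decomposition with $A_p$: the measure of the exceptional set is controlled by $[w]_{A_p}\lambda^{-p}\|f\|^p_{L^p(w)}$ through the standard weighted weak bound for $M^d$. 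The good part needs an $L^2$-type estimate whose constant costs $[w]_{A_p}^{1/p}$. This part is the main obstacle. I would first try a direct sparse-form argument, $\lambda^p w(Sf>\lambda)\le \lambda^{p-2}\sum_Q\langle|f|\rangle_Q^2w(Q\cap\{\ldots\})$, with a layer-cake estimate that uses $p<2$ so that the sum over levels converges geometrically.

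\emph{Sharpness.} For $p<2$ I would test power weights $w(x)=x^{(p-1)(1-\delta)}$ with $[w]_{A_p}\sim\delta^{-(p-1)}$, and $f=\chi_{[0,\epsilon)}w^{-1/(p-1)}$ or a suitable power. On $[\epsilon,1)$ one has $Sf\gtrsim Mf$, which forces the exponent $1/p$. For $p>2$ I would use a weight whose square function of a single Haar-type piece is large on a set of large measure. Lower bounds for the strong square function at exponent $1/2$ come from the Khinchine-type fact that $S$ dominates $|f|$ in distribution, and this transfers to weak type because the examples are level-like.
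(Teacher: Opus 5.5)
The paper gives no proof of this theorem; it is quoted from Lacey--Scurry and Hyt\"onen--Li. Your outline therefore has to carry the upper bounds by itself, and at the decisive steps it does not.

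\textbf{The case $2<p<3$.} Your displayed testing inequality has the wrong power of the characteristic. With $E=\{Sf>\lambda\}$ it would give $\lambda^2w(E)\le\int_E(Sf)^2w\lesssim[w]_{A_p}^{2/p}\|f\|_{L^p(w)}^2w(E)^{1-2/p}$, that is, $\|Sf\|_{L^{p,\infty}(w)}\lesssim[w]_{A_p}^{1/p}\|f\|_{L^p(w)}$ with $1/p<1/2$. This contradicts the sharpness half of the very statement (see the example below). The correct target carries $[w]_{A_p}$ to the first power.

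The detour through the linear sparse operator at exponent $p/2$ is also unavailable, because its weak-type bound requires $w\in A_{p/2}\subsetneq A_p$. Even after these corrections, ``a stopping-time argument in which no $A_\infty$ factor appears'' is the whole content of the theorem, not a proof of it. Whatever argument you give must use $p>2$ in an essential way, since at $p=2$ the analogous bound fails by a logarithmic factor (this is the main theorem of the paper).

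\textbf{The case $1\le p<2$.} For $p=1$, the good part as you describe it does not cost only $[w]_{A_1}$. Chebyshev gives $\lambda^{-2}\|Sg\|_{L^2(w)}^2$, so the weighted $L^2$ constant enters squared: $[w]_{A_2}^2$ from the sharp $A_2$ theorem, and at best $[w]_{A_1}$ for $A_1$ weights. Then $\|g\|_{L^2(w)}^2\le2\lambda\|g\|_{L^1(w)}$ costs $[w]_{A_1}$ once more, through $\langle|f|\rangle_{Q_j}w(Q_j)\le[w]_{A_1}\int_{Q_j}|f|w$. You end up with at least $[w]_{A_1}^2$.

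The missing idea is to estimate only $w(\{Sg>\lambda\}\setminus\Omega)$, where $\Omega=\bigcup_jQ_j$. Use the Fefferman--Stein type inequality $\int(Sg)^2v\lesssim\int g^2M^dv$ with $v=w\chi_{\Omega^c}$. Since $M^dv$ is constant on each $Q_j$ and bounded by $\inf_{Q_j}M^dw$, this yields $\lambda^{-1}\int|f|M^dw\le[w]_{A_1}\lambda^{-1}\|f\|_{L^1(w)}$.

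For $1<p<2$ you give no argument at all; you call it the main obstacle. The $A_1$ endpoint cannot simply be interpolated, since $w\in A_p$ need not lie in $A_1$.

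\textbf{Sharpness.} The sketch for $p>2$ points in the wrong direction. An inequality of the form $\|f\|\lesssim\|Sf\|$ only yields $\|S\|\gtrsim1$, and for a single Haar function $S(c\,h_I)=|c\,h_I|$. What is needed is $Sf\gg|f|$ on a set of large $w$-measure.

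For instance, take $w(x)=x^{\delta-1}$, for which $[w]_{A_p}\simeq_p\delta^{-1}$. Put $K=\lfloor1/\delta\rfloor$ and $f=\sum_{k=0}^{K-1}(-1)^kh_{[0,2^{-k})}$, where $h_I=\chi_{I_-}-\chi_{I_+}$ with $I_\pm$ the left and right halves of $I$. Then $|f|\le2$, so $\|f\|_{L^p(w)}^p\le2^p\delta^{-1}$. Meanwhile $Sf=\sqrt K$ on $[0,2^{-K})$, a set of $w$-measure $2^{-K\delta}\delta^{-1}\ge(2\delta)^{-1}$. Hence the ratio is $\gtrsim\delta^{-1/2}\simeq[w]_{A_p}^{1/2}$.

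For $p<2$, your power weights work only when $p>1$; at $p=1$ they degenerate to $w\equiv1$. It is simpler to test $f=\sigma\chi_Q$ with $\sigma=w^{1-p'}$. Then $Sf\ge\frac12\langle\sigma\rangle_Q$ on $Q$, giving $\|S\|_{L^p(w)\to L^{p,\infty}(w)}\ge\frac12[w]_{A_p}^{1/p}$ for every $p>1$. When $p=1$, test $f=\chi_J$ with $J\subseteq Q$ dyadic instead.
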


It turns out that for $p=2$ the situation is significantly harder. Lacey and Scurry \cite{LS} established the estimate
$$ \|S(f)\|_{L^{2,\infty}(w)}\lesssim [w]_{A_2}^{1/2}\log(1+[w]_{A_2})\|f\|_{L^2(w)},$$ 
which was later improved, by Domingo-Salazar, Lacey and Rey \cite{DS}, to the form
$$ \|S(f)\|_{L^{2,\infty}(w)}\lesssim \big([w]_{A_2}\log(1+[w]_{A_2})\big)^{1/2}\|f\|_{L^2(w)}.$$
There is a very natural question whether the additional logarithmic factor is necessary. This problem has gained a lot of interest in the literature, and the primary goal of this paper is to answer this question in the affirmative. Here is our main result.

\begin{theorem}\label{mainthm}
For any $c\geq 1$ there is a weight $w\in A_2^d$, satisfying $[w]_{A_2}\geq c$, with the following property: for some function $f\in L^2(w)$ we have
$$ \|S(f)\|_{L^{2,\infty}(w)}> \frac{e^{-2}}{48} \big([w]_{A_2}\log(1+[w]_{A_2})\big)^{1/2}\|f\|_{L^2(w)}.$$
\end{theorem}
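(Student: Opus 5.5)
The plan is to give an explicit dyadic construction. Fix a large parameter $N$ and build a weight $w$ and a function $f$ as follows. Starting from $[0,1)$, we repeatedly split off, at each step, the right half of the current interval. On the leftmost remaining interval the weight is multiplied by a fixed factor, so $w$ grows geometrically (like a power weight $x^{-\alpha}$ with $\alpha$ close to $1$) along the chain $[0,2^{-n})$, $n=0,\dots,N$. The rate is chosen so that $[w]_{A_2^d}\approx c$; this is the standard way to get a weight whose $A_2$ characteristic is comparable to a prescribed $c$. Dually, we take $f=w^{-1}\chi_E$ (or $f=\sigma=w^{-1}$ restricted to a suitable set), so that $\|f\|_{L^2(w)}^2=\sigma(E)$. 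The martingale differences of $f$ along the chain are then of size comparable to the averages $\langle\sigma\rangle_{[0,2^{-n})}$.

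\textbf{Step 1.} Compute $[w]_{A_2}$. Only intervals of the chain and their siblings matter, and these give a geometric-series computation.

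\textbf{Step 2.} Estimate $Sf$ from below on the set where it is large. The aim is to arrange that on a set $F$ the square function accumulates about $\log [w]_{A_2}$ comparable jumps, each of size $\lambda/\sqrt{\log}$ in the right normalization. Then $Sf\gtrsim\lambda$ on $F$, while $w(F)\lambda^2$ exceeds $\sigma(E)$ by the factor $[w]\log[w]$. The logarithm should come from summing $\sum_n |d_n f|^2$ over about $\log[w]$ scales, on which the weight ratio stays comparable. This is the analogue of the $L\log L$ examples of \cite{LNO}: a block of $\sim\log c$ generations on which $\sigma$ has constant contribution per generation.

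\textbf{Step 3.} Optimize $N$ against $c$ and track constants, aiming for the factor $e^{-2}/48$. The $e^{-2}$ will likely come from $(1-1/k)^k$-type products along $k\approx$ (multiplier) steps.

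The main obstacle is Step 2, namely making the square function large on a set of large $w$-measure. Weak-type $L^2$ bounds cannot exploit the plain $[w]^{1/2}$ extremizers, because those concentrate $Sf$ where $w$ is small. So I would first try a two-level construction. An outer chain makes $[w]\approx c$. At each scale, the function $f$ has a small mass placed on the low-weight child, so that it generates a martingale jump of unit size seen on the high-weight sibling. Equalizing the jumps across the $\log c$ scales makes the $\ell^2$ sum grow like $\sqrt{\log c}$, while the $L^2(w)$ norms add only linearly in the number of scales. I would verify the resulting weak-type ratio by direct computation. If the naive chain loses the logarithm, I would instead iterate the construction self-similarly inside each high-weight sibling.
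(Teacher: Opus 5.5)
There is a genuine gap: what you have written is a search strategy, not a construction. No weight or function is actually specified. None of the three estimates the theorem needs is verified: an upper bound for $[w]_{A_2}$, an upper bound for $\|f\|_{L^2(w)}$, and a lower bound for $w(Sf\geq\lambda)$. Step 2 only restates the goal.

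Worse, the one concrete mechanism you propose does not produce the logarithm. Take your two-level chain with $w\approx x^{-1+1/c}$ and unit jumps on $K\approx\log c$ scales, seen on the bottom interval $F$. Then $S^2f\approx K$ on $F$ and $w(F)\approx c$. But, as you say yourself, every scale costs about $1$ in $\|f\|^2_{L^2(w)}$, so $\|f\|^2_{L^2(w)}\approx K$. With $\lambda^2=K$ this gives $\lambda^2w(Sf\geq\lambda)/\|f\|^2_{L^2(w)}\approx Kc/K=c$. The factor $\log c$ cancels exactly, and this is why simple chain examples stop at $[w]_{A_2}^{1/2}$. What is needed is a mechanism in which $S^2f$ gains a factor $\log[w]_{A_2}$ on a set of $w$-measure comparable to $[w]_{A_2}\|f\|^2_{L^2(w)}$, without $\|f\|^2_{L^2(w)}$ growing by the same factor.

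The paper supplies this with ideas absent from your plan.

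\emph{Joint construction.} It builds $(w,\sigma,f)$ as a single three-dimensional dyadic martingale whose first two coordinates terminate on $xy=1$. Hence $\sigma=w^{-1}$, and $\langle w\rangle_I\langle\sigma\rangle_I$ is the current value of $w_n\sigma_n$, which is kept in $[1,6\cdot 2^N]$.

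\emph{Return set.} The building block exits to $xy=1$ except on a set of probability $(1-4^{-N})/3$. On that set the state is the initial one rescaled by $(u,v)\mapsto(\lambda u,\lambda^{-1}v)$, where $\lambda=(1+\alpha)/\alpha\approx 3$, with the same product $uv$ and the same value $f=2$.

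\emph{Iteration.} Since $\lambda(1-4^{-N})/3=1-O(2^{-N})$, the block can be iterated $2^N\approx[w]_{A_2}$ times on the return sets. This keeps the $A_2$ bound, keeps the $w$-measure of the surviving set $\gtrsim 2^N$, and adds only $O(1)$ to $\|f\|^2_{L^2(w)}$ per iteration, so $\|f\|^2_{L^2(w)}\lesssim 2^N$.

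\emph{Source of the logarithm.} It does not come from comparable jumps along one chain. Each block contributes to $S^2f$ an independent increment $\xi$ with $\mathbb{P}(\xi=4^{n-1})=3\cdot 4^{-n}/(1-4^{-N})$ for $1\leq n\leq N$, a tail $\mathbb{P}(\xi\geq t)\approx 1/t$. A sum of $2^N$ such variables is about $\frac{3}{8}N2^N$ with probability bounded below; the paper shows this by truncating at $n\approx N/2$ and a characteristic-function argument.

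Your fallback of iterating ``self-similarly inside each high-weight sibling'' points loosely in this direction. But without the exact balance between weight growth and probability loss per iteration, the return to a rescaled copy of the initial state with the same $A_2$ product, and the heavy-tailed increments, it does not yield the theorem.
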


This completes the description of sharp weighted weak-type estimates for square functions. The resulting dependence on the weight characteristic is closely analogous to that for singular integral operators, except that the critical exponent, at which a logarithmic correction appears, is shifted from $p=1$ to $p=2$.

Theorem \ref{mainthm} will be established in the next section, by constructing an appropriate example. It will be convenient to use probabilistic language: the construction is more naturally formulated in martingale terms, with the relevant functions and weights viewed as stochastic processes satisfying prescribed evolution rules.

\section{Construction}

This section is divided into two parts.

\subsection{A building block} 
Let $N\geq 2$ be a fixed positive integer. We start with the introduction of three auxiliary parameters. First, let $\alpha$ be a positive number satisfying the equation
\begin{equation}\label{defalpha}
 \alpha\left(\frac{2}{3}\cdot 2^N+\frac{1}{3}\cdot 2^{-N}\right)-\alpha^{-1}-\frac{1}{3}(2^N-2^{-N})=0.
\end{equation}
Of course, $\alpha$ can be computed explicitly; however, all that we will need below is that $\alpha$ is unique and satisfies $1/2<\alpha<1/2+3\cdot 2^{-N}$. To see this, denote the left-hand side of \eqref{defalpha} by $H(\alpha)$. Then $H$ is strictly increasing on $(0,\infty)$ and satisfies 
$$ H\left(\frac{1}{2}\right)=2^{-1-N}-2<0,\qquad  H\left(\frac{1}{2}+\frac{3}{2^N}\right)> \left(\frac{1}{2}+\frac{3}{2^N}\right)\cdot \frac{2}{3}\cdot 2^N-2-\frac{2^N}{3}=0.$$
The remaining two parameters $\beta,\gamma\in (0,1)$ are given by
\begin{equation}\label{defbeta}
\beta=(1+2^{-N}\alpha)^{-1/2},\qquad \gamma=(1+2^{-N}\alpha^{-1})^{-1/2}.
\end{equation}
Next, we define two sequences of positive numbers which will be useful later. For any $n=0,\,1,\,2,\,\ldots,\,N$, let
$$ x_n=\frac{1}{3}\cdot 2^{N-n}-\frac{1}{3}\cdot 2^{n-N}+\alpha^{-1},\qquad y_n=\frac{4}{3}\cdot 2^n+\frac{2}{3}\cdot 2^{-n}.$$
Some important properties of these sequences are gathered in a lemma below.

\begin{lemma}
(i) We have $x_N=\alpha^{-1}$, $y_0=2$ and 
\begin{equation}\label{recurrences}
 x_n=\frac{x_{n+1}+2^{N-n-1}+\alpha^{-1}}{2},\qquad y_n=\frac{y_{n+1}+2^{-n}}{2}
\end{equation}
for $n=0,\,1,\,2,\,\ldots,\,N-1$. 

(ii) We have the identity
\begin{equation}\label{defalfa}
x_0(1+\alpha)=(2^N+\alpha^{-1})\alpha.
\end{equation}

(iii) For any $n=0,\,1,\,2,\,\ldots,\,N$ we have
\begin{equation}\label{char}
 x_n(2^n+\alpha)<6\cdot 2^N.
\end{equation}
\end{lemma}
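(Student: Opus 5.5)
This is a direct computation from the closed forms of $x_n$ and $y_n$ together with the defining equation \eqref{defalpha} and the bounds $1/2<\alpha<1/2+3\cdot 2^{-N}$ established above. We take the three parts in turn.

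\textbf{Part (i).} The boundary values are immediate: at $n=N$ the first two terms of $x_N$ cancel, leaving $\alpha^{-1}$, and $y_0=\frac43+\frac23=2$. For the recurrences we substitute $n+1$ into the formulas. First,
\[
\frac{x_{n+1}+2^{N-n-1}+\alpha^{-1}}{2}=\frac{\tfrac13 2^{N-n-1}-\tfrac13 2^{n+1-N}+2^{N-n-1}+2\alpha^{-1}}{2}.
\]
Here $\frac13 2^{N-n-1}+2^{N-n-1}=\frac43 2^{N-n-1}$, and halving gives $\frac13 2^{N-n}$. Halving also gives $-\frac13 2^{n-N}$ and $\alpha^{-1}$, which together recover $x_n$. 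Similarly,
\[
\frac{y_{n+1}+2^{-n}}{2}=\frac{\tfrac43 2^{n+1}+\tfrac23 2^{-n-1}+2^{-n}}{2}=\frac43 2^n+\frac{\tfrac13 2^{-n}+2^{-n}}{2},
\]
and the last fraction equals $\frac23 2^{-n}$, so this is $y_n$.

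\textbf{Part (ii).} We write $x_0=\frac13(2^N-2^{-N})+\alpha^{-1}$ and expand
\[
x_0(1+\alpha)=\tfrac13(2^N-2^{-N})+\tfrac{\alpha}{3}(2^N-2^{-N})+\alpha^{-1}+1 .
\]
The target is $(2^N+\alpha^{-1})\alpha=\alpha 2^N+1$. Subtracting the target from the expansion leaves
\[
\tfrac13(2^N-2^{-N})+\alpha^{-1}-\alpha\left(\tfrac23 2^N+\tfrac13 2^{-N}\right)=-H(\alpha),
\]
using $\frac{\alpha}{3}2^N-\alpha 2^N=-\frac23\alpha 2^N$. This vanishes by \eqref{defalpha}, which proves the identity.

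\textbf{Part (iii).} This is the step requiring estimates. Since $\alpha>1/2$, we have $\alpha^{-1}<2$, hence $x_n<\frac13 2^{N-n}+2$. Since $\alpha<1/2+3\cdot 2^{-N}\le 2$ for $N\ge 2$, we also have $2^n+\alpha<2^n+2$. Therefore
\[
x_n(2^n+\alpha)<\left(\tfrac13 2^{N-n}+2\right)(2^n+2)=\tfrac13 2^N+\tfrac23 2^{N-n}+2^{n+1}+4 .
\]
Each term on the right is bounded using $0\le n\le N$ and $N\ge 2$:
\begin{itemize}
\item $\frac23 2^{N-n}\le\frac23 2^N$;
\item $2^{n+1}\le 2\cdot 2^N$;
\item $4\le 2^N$.
\end{itemize}
The total is therefore at most $\left(\frac13+\frac23+2+1\right)2^N=4\cdot 2^N<6\cdot 2^N$.

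The only care needed in (iii) is to use both bounds on $\alpha$ and the assumption $N\ge2$. Parts (i) and (ii) are pure algebra.
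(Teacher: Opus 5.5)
Your proof is correct and follows the same route as the paper. Parts (i) and (ii) are direct algebra, with (ii) reducing to $H(\alpha)=0$. Part (iii) bounds each factor using the bounds on $\alpha$ and multiplies; the paper uses $\alpha\le 1$ to get $x_n<3\cdot 2^{N-n}$ and $2^n+\alpha<2\cdot 2^n$ directly, whereas you expand the product and need only $\alpha<2$ (valid for every $N\ge 2$), which yields the slightly better bound $4\cdot 2^N$.
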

\begin{proof}
The property (i) is straightforward, the identity in (ii) is equivalent to \eqref{defalpha}. To check \eqref{char}, we use the inclusion $\alpha\in [1/2,1]$, which gives 
$x_n<2^{N-n}+2\leq 3\cdot 2^{N-n}$ and $2^n+\alpha<2\cdot 2^n$.
\end{proof}

We proceed to the construction of the main building block. Let us assume that the probability space is some dyadic subinterval $I$ of $[0,1)$,  equipped with its Borel subsets and normalized Lebesgue measure; we equip it with the filtration induced by consecutive dyadic generations. 
Consider the three-dimensional dyadic martingale $\big((u_n,v_n,\varphi_n)\big)_{n\geq 0}$, whose distribution (evolution) is uniquely determined by the following requirements.

\medskip

(i) We have $(u_0,v_0,\varphi_0)\equiv (x_0,1+\alpha,y_0)$ almost surely.

\smallskip

(ii) For $n=0,\,1,\,2,\,\ldots,\,N-1$, any point of the form $(x_n,2^n+\alpha,y_n)$ leads to the state $(x_{n+1},2^{n+1}+\alpha,y_{n+1})$ or $(2^{N-n-1}+\alpha^{-1},\alpha,2^{-n})$.

\smallskip

(iii) The point $(x_N,2^N+\alpha,y_N)=(\alpha^{-1},2^N+\alpha,y_N)$ leads to one of the positions $\Big((1\pm \beta)\alpha^{-1},(2^N+\alpha)(1\mp \beta),y_N(1\mp \beta)\Big).$

\smallskip

(iv) For any $n=0,\,1,\,2,\,\ldots,\,N$, any point of the form $(2^{N-n-1}+\alpha^{-1},\alpha,2^{-n})$ leads to $(\alpha^{-1},\alpha,0)$ or to $(2^{N-n}+\alpha^{-1},\alpha,2^{1-n})$.

\smallskip

(v) All the remaining points are absorbing.

\medskip

It is easy to check, using \eqref{recurrences}, that the above requirements do describe the dyadic martingale. Indeed, in (ii), (iii) and (iv), with transition probabilities equal to $1/2$, the appropriate averages match the starting positions. To gain some intuition behind the construction, let us first look at the behavior of the martingale $((u_n,v_n))_{n\geq 0}$ (see Figure \ref{graph}). The pair starts from the point 
\begin{figure}[htbp]
\begin{center}
\includegraphics[scale=0.9]{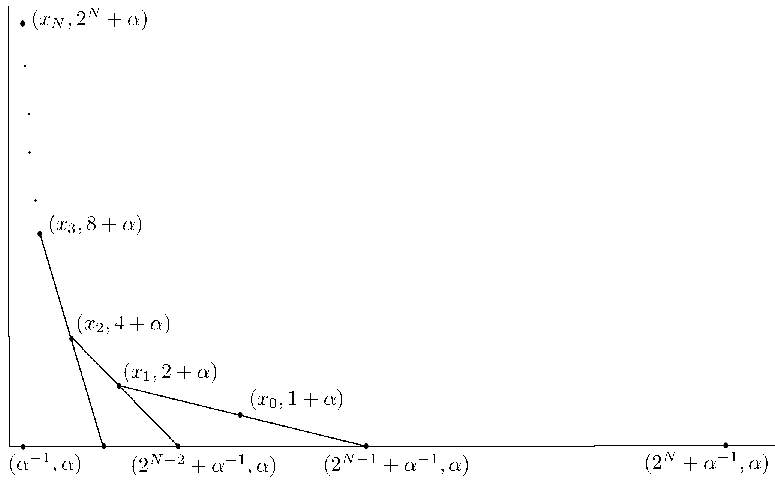}
\caption{The evolution of the pair $((u_n,v_n))_{n\geq 0}$. The process starts at $(x_0,1+\alpha)$ and moves along the endpoints of skew line segments, until it drops to the line $v=\alpha$; then it moves horizontally.}\label{graph}
\end{center}
\end{figure} 
$(x_0,1+\alpha)$ and it is convenient to split its evolution into two phases. 

\smallskip

\emph{Phase I: the requirements (ii) and (iii)}. During  this phase, the second coordinate $v$ increases for a number of steps, moving from $1+\alpha$ to $2+\alpha$, $4+\alpha$, and so on. There are two possibilities: first, it might happen that at some time $n$, the martingale $v$ drops from $2^n+\alpha$ to $\alpha$; then Phase II begins. The second possibility is that the pair $(u,v)$ reaches the point $(x_N,2^N+\alpha)=(\alpha^{-1},2^N+\alpha)$. If this happens, in the next step the pair moves to the curve $xy=1$: indeed, we have
$$ (1\pm \beta)\alpha^{-1}\cdot (2^N+\alpha)(1\mp \beta)=(1-\beta^2)(2^N\alpha^{-1}+1)=1,$$
by the very definition of $\beta$ (see \eqref{defbeta}). After that, the pair terminates, and in this case Phase II is empty.

\smallskip

\emph{Phase II: the requirements (iv)}. If $v$ drops to $\alpha$ at some time $n$, then the process $(u,v)$ starts to move horizontally (i.e., $v$ remains equal to $\alpha$ until the end of the evolution). At each step, the first coordinate either goes to $\alpha^{-1}$, or increases from the point of the form $2^k+\alpha^{-1}$ to $2^{k+1}+\alpha^{-1}$. The phase terminates when $u$ reaches the set $\{\alpha^{-1},2^N+\alpha^{-1})$.

\smallskip

Some important remarks are in order. First, it is clear that $(u,v)$ is a finite martingale: the evolution lasts $2N$ steps at the longest. Next, the pair takes values in the set $\{(x,y)\in \R_+^2\,:\,1\leq xy\leq 6\cdot 2^N\}$. Indeed, during Phase~I this follows from \eqref{char}, and for Phase II we use \eqref{defalfa} and \eqref{char} again, to observe that 
$$ 1=\alpha \cdot \alpha^{-1}\leq \alpha \cdot (2^k+\alpha^{-1})\leq \alpha \cdot (2^N+\alpha^{-1})=x_0(1+\alpha)<6\cdot 2^N.$$
Finally, observe that for most trajectories, the pair $(u,v)$ terminates at the curve $xy=1$. The exceptional trajectories correspond to the scenarios in which Phase~I lasts for $k$ steps ($k=1,\,2,\,\ldots,\,N$) and then, in Phase II, the pair moves after $k$ steps to the rightmost point $(2^N+\alpha^{-1},\alpha)$. Therefore, we may write
\begin{equation}\label{uv}
 \mathbb{P}(u_\infty v_\infty\neq 1)=\sum_{k=1}^{N} 2^{-k}\cdot 2^{-k}=\frac{1-4^{-N}}{3}. 
 \end{equation}
Furthermore, by \eqref{defalfa}, if $u_\infty v_\infty\neq 1$ (that is, $(u_\infty,v_\infty)=(2^N+\alpha^{-1},\alpha)$), then $u_\infty v_\infty=u_0v_0$. This will be crucial for the iteration argument which will be described later. 

\smallskip

Now we turn our attention to the martingale $(\varphi_n)_{n\geq 0}$; to analyze its behavior, it will be convenient to refer to Phases I and II  described above.

\smallskip

\emph{Phase I.} Here the martingale $(\varphi_n)_{n\geq 0}$ evolves in the same manner as $(v_n)_{n\geq 0}$: it increases for a number of steps and then drops to some level (different for different steps), initiating Phase II. In the exceptional case in which $(u,v)$ gets to the point $(x_N,2^N+\alpha)$, the process $\varphi$ reaches the value $y_N$, and in the next step it jumps to $y_N(1\pm \beta)$. Then the evolution stops.

\smallskip

\emph{Phase II.} Here the movement of $\varphi$ is similar to that of $u$: at each step $\varphi$ either drops to zero (and stops ultimately), or doubles its value, until it gets to $2$. The trajectories of $\varphi$ which lead to $2$ correspond precisely to those trajectories of $(u,v)$, which terminate at the position $(2^N+\alpha^{-1},\alpha)$. That is, we have the equality $\{\varphi_\infty=2\}=\{u_\infty v_\infty\neq 1\}$.

\smallskip

It follows from the above analysis that the martingale $(\varphi_n)_{n\geq 0}$ starts from $2$ and its almost sure limit $\varphi_\infty$ takes values in the set $\{0,y_N(1\pm \beta),2\}$. We have
$$ \mathbb{P}(\varphi_\infty=y_N(1-\beta))=\mathbb{P}(\varphi_\infty=y_N(1+\beta))=2^{-N-1}$$
and hence
\begin{equation}\label{K0}
\begin{split}
 &\E \varphi_\infty^2 u_\infty \chi_{\{u_\infty v_\infty=1\}}\\
&=2^{-N-1}\Big[\big(y_N(1-\beta)\big)^2(1+\beta)\alpha^{-1}+\big(y_N(1+\beta)\big)^2(1-\beta)\alpha^{-1}\Big]\\
 &\leq 2^{N+2}\alpha^{-1}(1-\beta^2)\beta\leq 8,
\end{split}
\end{equation}
where the first inequality is due to the estimate $y_N\leq 2^{N+1}$, and in the second passage we have used the bounds $\alpha^{-1}\leq 2$, $\beta\leq 1$ and $ 1-\beta^2=(2^N\alpha^{-1}+1)^{-1}\leq 2^{-N}.$

The final observation concerns the conditional behavior of the square function of $\varphi$ on $\{u_\infty v_\infty \neq 1\}$. Namely, on this set the martingale $\varphi$ increases for a number of steps, then drops, thus initiating Phase II, and finally moves up until it gets back to $2$. Formally, we can write this as the splitting
\begin{equation}\label{An}
 \{u_\infty v_\infty \neq 1\}=\bigcup_{n=1}^N A_n,\qquad A_n=\{\varphi_n<\varphi_0\leq \varphi_{n-1},\,\varphi_\infty=2\}
 \end{equation}
(the event $A_n$ corresponds to the scenario in which the drop of $\varphi$ is observed at time $n$). 
 Note that  $\mathbb{P}(A_n)=4^{-n}$. Furthermore, on the set $A_n$ we have 
$$ S(\varphi)\geq |\varphi_n-\varphi_{n-1}|= y_n-y_{n-1}=\frac{4}{3}\cdot 2^{n-1}-\frac{2}{3}\cdot 2^{-n}\geq 2^{n-1}.$$
Squaring this inequality, we see that  if we introduce the random variable
$$ \xi=\begin{cases}
4^{n-1} & \mbox{on }A_n,\,n=1,\,2,\,\ldots,\,N,\\
0 & \mbox{on }\{u_\infty v_\infty=1\},
\end{cases}$$
then we have $S^2(\varphi)\geq \xi$ and the conditional distribution of $\xi$ on $\{u_\infty v_\infty\neq 1\}$ is
\begin{equation}\label{cond_dist}
 \mathbb{P}\big(\xi=4^{n-1}\,|\,u_\infty v_\infty\neq 1\big)=4^{-n}\cdot \frac{3}{1-4^{-N}},\qquad n=1,\,2,\,\ldots,\,N.
\end{equation}

\subsection{Construction of the example} We are ready to describe the functions and weights which provide the appropriate lower bound. Note that in the previous subsections, the underlying probability space was some arbitrary dyadic subinterval $I$ of $[0,1)$. It will be convenient to denote the constructed triple by $(u^I,v^I,\varphi^I)$.

Let us define an appropriate finite dyadic martingale $(w,\sigma,f)=((w_n,\sigma_n,f_n))_{n\geq 0}$ on $\Omega=[0,1)$; this will be done by $2^N$ successive extensions of processes constructed above. 

\smallskip

\emph{Step 1.} We start by taking $(w,\sigma,f)=(u^\Omega,v^\Omega,\varphi^\Omega)$. On the set $\{u^\Omega_\infty v^\Omega_\infty=1\}$, this is the final formula; on the compliment of this set, we will need to modify (extend) the martingale. It will be convenient to say that on the set $\{u^\Omega_\infty v^\Omega_\infty=1\}$, the martingale $(w,\sigma,f)$ is modified $0$ times.

\smallskip

\emph{Step 2.} Note that  $\{u_\infty^\Omega v_\infty^\Omega\neq 1\}$ is the union of a finite number of pairwise disjoint dyadic intervals $I_1$, $I_2$, $\ldots$, $I_m$, on which $(u_\infty^\Omega,v_\infty^\Omega,\varphi_\infty^\Omega)\equiv (2^N+\alpha^{-1},\alpha,2)$. (Actually, these intervals are precisely the events $A_n$ appearing in \eqref{An} - but we will not need this). On each interval $I_j$, we consider the rescaled martingale 
$$(u^{I_j}(1+\alpha)/\alpha,v^{I_j} \alpha/(1+\alpha),\varphi^{I_j}).$$
 This new process starts from $(x_0(1+\alpha)/\alpha,\alpha,2)$, which by \eqref{defalfa} is precisely the terminal position of $(u_\infty^\Omega,v_\infty^\Omega,\varphi_\infty^\Omega)$. Therefore, we can extend the martingale $(w,\sigma,f)$ by requiring that on  $I_j$, the terminal variable $(w,\sigma,f)$ is equal to 
$$(u_\infty^{I_j}(1+\alpha)/\alpha,v_\infty^{I_j}\alpha/(1+\alpha),\varphi^{I_j}_\infty).$$
In other words, the triple $(w,\sigma,f)$ behaves as $(u^\Omega,v^\Omega,\varphi^\Omega)$ and then, when it reaches the state $(2^N+\alpha^{-1},\alpha,2)$ - which corresponds to one of the events $I_j$ - then it copies the evolution of  $(u^{I_j}(1+\alpha)/\alpha,v^{I_j}\alpha/(1+\alpha),\varphi^{I_j}).$ Now, on the set $\{u^{I_j}_\infty v^{I_j}_\infty=1\}$, this will be the end of the construction (we will say that on this set, the process was modified once). On the set $\{u^{I_j}_\infty v^{I_j}_\infty\neq 1\}$ we have $(w_\infty,\sigma_\infty,f_\infty)=((2^N+\alpha^{-1})(1+\alpha)/\alpha,\alpha^2/(1+\alpha),2)$ and the further extension is necessary.

\smallskip

\emph{Step 3.} Essentially, this step is the repetition of the previous one and describes the modification of $(w,\sigma,f)$ after $k$ iterations, where $k<2^N$. Namely, suppose that we have defined a finite martingale $(w,\sigma,f)$ such that on the set $\{w_\infty \sigma_\infty\neq 1\}$, the terminal value $(w_\infty,\sigma_\infty,f_\infty)$ is equal to $\Big((2^N+\alpha^{-1})(1+\alpha)^k/\alpha^k,\alpha^{k+1}/(1+\alpha)^k,2\Big)$. Suppose in addition, that the set $\{w_\infty \sigma_\infty\neq 1\}$ is a union of a finite number of pairwise disjoint dyadic intervals; with a slight abuse of notation, let us denote these intervals by $I_1$, $I_2$, $\ldots$, $I_m$ again. On each $I_j$, we consider the rescaled martingale $(u^{I_j}(1+\alpha)^{k+1}/\alpha^{k+1},v^{I_j} \alpha^{k+1}/(1+\alpha)^{k+1},\varphi^{I_j})$. This process starts from $(x_0(1+\alpha)^{k+1}/\alpha^{k+1},\alpha^{k+1}/(1+\alpha),2)$, which by \eqref{defalfa} is precisely the terminal value of $(w,\sigma,f)$. Thus we can extend the latter triple on $I_j$, requiring that the conditional distribution coincides with that of the rescaled process. This is precisely the $k+1$-st iteration.

\smallskip

\emph{Step 4.} Finally, we make a final modification, after iterating $2^N$ times. Precisely, suppose that we have defined a finite martingale $(w,\sigma,f)$ such that on the set $\{w_\infty \sigma_\infty\neq 1\}$, the terminal value $(w_\infty,\sigma_\infty,f_\infty)$ is equal to $\Big((2^N+\alpha^{-1})(1+\alpha)^{2^N}/\alpha^{2^N},\alpha^{{2^N}+1}/(1+\alpha)^{2^N},2\Big)$. Let us further assume that the set $\{w_\infty \sigma_\infty\neq 1\}$ is a union of a finite family of pairwise disjoint intervals $I_1$, $I_2$, $\ldots$, $I_m$. Then on each $I_j$, we assume that the martingale $(w,\sigma,f)$ leads to one of the points
$$ \Big((1\pm \gamma)(2^N+\alpha^{-1})(1+\alpha)^{2^N}/\alpha^{2^N},(1\mp \gamma)\alpha^{{2^N}+1}/(1+\alpha)^{2^N},2(1\mp \gamma)\Big)$$
with probabilities $1/2$. Here $\gamma$, given by \eqref{defbeta}, is chosen so that $w_\infty\sigma_\infty=1$.

\smallskip

This completes the description of $(w,\sigma,f)$. From now on, following the usual convention in semimartingale theory, we will identify $w,\,\sigma$ and $f$ with the terminal values $w_\infty$, $\sigma_\infty$ and $f_\infty$; this should not lead to any confusion. Directly from the construction, we see that the triple is a finite dyadic martingale satisfying $w \sigma=1$ almost surely; that is, we have $\sigma=w^{-1}$ with probability $1$.

\begin{lemma}
We have $\frac{1}{2}\cdot 2^N\leq [w]_{A_2}\leq 6\cdot 2^N$.
\end{lemma}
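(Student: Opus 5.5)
The key observation is that, because $(w,\sigma,f)$ is a dyadic martingale with terminal values satisfying $\sigma=w^{-1}$, for every dyadic interval $Q$ the averages $\frac{1}{|Q|}\int_Q w$ and $\frac{1}{|Q|}\int_Q w^{-1}$ are exactly the values $w_n$ and $\sigma_n$ of the martingale at the corresponding node. This holds both for intervals at which the construction is still evolving and for intervals lying inside an atom where the process has stopped; in the latter case $w_n\sigma_n=w_\infty\sigma_\infty=1$. Hence
$$[w]_{A_2}=\sup_{n,\,x}w_n(x)\sigma_n(x),$$
and the lemma reduces to bounding the product $w_n\sigma_n$ along all trajectories of the constructed process.

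For the upper bound we go through the pieces of the construction. In Steps 1--3, on each interval $I_j$ the triple is a rescaling of $(u^{I_j},v^{I_j},\varphi^{I_j})$ in which the first coordinate is multiplied by $((1+\alpha)/\alpha)^{k}$ and the second by the reciprocal factor. The product $w_n\sigma_n$ is therefore equal to some $u_mv_m$ of the building block. Earlier we showed that the building block takes values in $\{1\le xy\le 6\cdot 2^N\}$: in Phase I this follows from \eqref{char}, and in Phase II from \eqref{defalfa} together with \eqref{char}. So $w_n\sigma_n\le 6\cdot 2^N$ throughout these steps. In the final Step 4 the starting point has product $(2^N+\alpha^{-1})\alpha=x_0(1+\alpha)<6\cdot 2^N$, again by \eqref{defalfa} and \eqref{char}, and the terminal products equal $1$. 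At the atoms where the process has stopped the product is $1$. This gives $[w]_{A_2}\le 6\cdot 2^N$.

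For the lower bound it suffices to exhibit one node with large product. The simplest choice is the initial node $[0,1)$, where the product is $x_0(1+\alpha)=(2^N+\alpha^{-1})\alpha\ge 2^N\alpha\ge \frac12\cdot 2^N$, using $\alpha>1/2$. Alternatively, one could take the node $(x_N,2^N+\alpha)$, with product $\alpha^{-1}(2^N+\alpha)\ge 2^N$.

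The only step that needs real care is the identification of dyadic averages of $w$ and $w^{-1}$ with the martingale values $(w_n,\sigma_n)$. This uses that $\sigma_\infty=1/w_\infty$ holds exactly, and that the filtration is the dyadic one, so that every dyadic interval corresponds either to a node of the martingale or to a subinterval of a terminal atom. Once this identification is in place, both bounds follow directly from the estimates already proved.
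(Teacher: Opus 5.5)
Your proposal is correct and follows essentially the same route as the paper: you identify the dyadic averages of $w$ and $\sigma=w^{-1}$ with the martingale values $(w_n,\sigma_n)$, use the fact that the building block and its rescalings (together with the final $\gamma$-step, which lands on $xy=1$) stay in $\{1\le xy\le 6\cdot 2^N\}$, and obtain the lower bound from the root node, where $x_0(1+\alpha)=(2^N+\alpha^{-1})\alpha\ge \frac12\cdot 2^N$. Your explicit check of Step 4 and your alternative node $(x_N,2^N+\alpha)$ are valid extras, though the paper's argument needs neither.
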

\begin{proof}
 As we have discussed in the previous subsection, for each $I$ the pair $(u^I,v^I)$ takes values in the set $\{(x,y)\in \R_+^2\,:\,1\leq xy\leq 6\cdot 2^N\}$. The scaling of the form $(u^I,v^I)\mapsto (u^I\cdot \lambda, v^I\cdot \lambda^{-1})$ preserves this property and hence the pair $(w,\sigma)$ also enjoys this condition. In other words, for any dyadic interval $I$ we have the double inequality
$$ 1\leq \frac{1}{|I|}\int_I w \cdot \frac{1}{|I|}\int_I \sigma \leq 6\cdot 2^N,$$
which combined with the equality $\sigma=w^{-1}$ implies that $w$ is a dyadic $A_2$ weight satisfying $[w]_{A_2}\leq 6\cdot 2^N$. Finally, note that
$$ [w]_{A_2}\geq \int_{[0,1)}w\cdot \int_{[0,1)} \sigma=x_0(1+\alpha)=(2^N+\alpha^{-1})\alpha\geq 2^N\cdot \frac{1}{2}. \qedhere$$
\end{proof}

\begin{lemma}
We have $\|f\|_{L^2(w)}\leq 2^{(N+3)/2}$.
\end{lemma}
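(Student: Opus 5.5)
The plan is to compute $\|f\|_{L^2(w)}^2=\E f^2w$ directly, by splitting $[0,1)$ according to how many times the martingale $(w,\sigma,f)$ was modified before it terminated. For $k=0,1,\ldots,2^N-1$, let $E_k$ be the set where the process was modified exactly $k$ times and then stopped on the curve $w\sigma=1$ inside the $(k+1)$-st building block. Let $E_{2^N}$ be the set handled in Step 4. These sets partition $[0,1)$ up to a null set. Put $\lambda=(1+\alpha)/\alpha$ and $p=\frac{1-4^{-N}}{3}$; by \eqref{uv}, $p=\mathbb{P}(u_\infty v_\infty\neq 1)$.

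\emph{The sets $E_k$, $k<2^N$.} The process enters the $(k+1)$-st block with probability $p^k$, by \eqref{uv} and the fact that each block is an independent rescaled copy. Inside that block we have $w=\lambda^k u^{I}$ and $f=\varphi^{I}$. Hence
$$\E f^2w\chi_{E_k}=p^k\lambda^k\,\E\varphi_\infty^2u_\infty\chi_{\{u_\infty v_\infty=1\}}.$$
I will use a slightly sharper form of \eqref{K0}. The bracket there equals $2y_N^2\alpha^{-1}(1-\beta^2)$, so the expectation is at most
$$2^{-N}y_N^2\alpha^{-1}(1-\beta^2)\leq 2^{N+2}\frac{\alpha^{-1}}{2^N\alpha^{-1}+1}<4.$$
Moreover, $p\lambda=\frac{1-4^{-N}}{3}(1+\alpha^{-1})<1$, because $\alpha>1/2$ gives $1+\alpha^{-1}<3$. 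So each of these $2^N$ terms is less than $4$, and together they contribute at most $4\cdot 2^N$.

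\emph{The final step.} The set $E_{2^N}$ has probability $p^{2^N}$. On it,
$$w=(1\pm\gamma)(2^N+\alpha^{-1})\lambda^{2^N},\qquad f=2(1\mp\gamma),$$
each with conditional probability $1/2$. The same algebra as above gives
$$\E f^2w\chi_{E_{2^N}}=(p\lambda)^{2^N}(2^N+\alpha^{-1})\cdot 4(1-\gamma^2).$$
Now $1-\gamma^2=(2^N\alpha+1)^{-1}$ by \eqref{defbeta}, and $(2^N+\alpha^{-1})/(2^N\alpha+1)=\alpha^{-1}\leq 2$. Together with $p\lambda<1$, this bounds the last contribution by $8$.

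Summing, $\|f\|_{L^2(w)}^2\leq 4\cdot 2^N+8\leq 2^{N+3}$ since $N\geq 1$, which is the claim. The only delicate point is the bookkeeping: \eqref{K0} as stated gives the constant $8$ per block, which would be too crude and would overshoot by the final-step term. So I will reopen that computation to get the constant $4$ as above. I also need to check carefully that $p\lambda<1$, so that the geometric growth of the weight is compensated by the decay of the probabilities.
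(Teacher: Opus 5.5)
Your proof is correct and follows the paper's argument: the same splitting by the number of modifications, the same scaling reduction of each block to \eqref{K0} together with $p\lambda<1$, and the same direct computation of the Step~4 term. Your sharpening of \eqref{K0} to the constant $4$ (the bracket there is exactly $2y_N^2\alpha^{-1}(1-\beta^2)$) is genuinely needed, because the paper bounds $2^N+1$ pieces by $8$ each and then asserts the total $8\cdot 2^N$; your constant also leaves enough slack for the off-by-one in Steps~2--4 (read literally, there are $2^N+1$ blocks before the final move, so your partition misses one, but $4(2^N+1)+8\leq 2^{N+3}$ still holds for $N\geq 2$).
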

\begin{proof}
To compute $ \E f_\infty^2w_\infty$, let us distinguish the set $J_k=\{(w,\sigma,f)$ was modified (extended) $k$ times$\}$. As we have proved in \eqref{K0}, we have 
$$ \E f_\infty^2w_\infty \chi_{J_0}\leq 8.$$
Therefore, by \eqref{uv} and the scaling described in Step 2 above, we get
$$ \E f_\infty^2 w_\infty \chi_{J_k}\leq \left(\frac{1-4^{-N}}{3}\right)^k \cdot \left(\frac{1+\alpha}{\alpha}\right)^k\cdot 8\leq 8$$
for $1\leq k<{2^N}$ (here in the last passage we have used the estimate $\alpha> 1/2$). 
Finally, exploiting the information from Step 4 above, we compute directly that
\begin{align*}
 \E f_\infty^2 w_\infty \chi_{J_{2^N}}&=\left(\frac{1-4^{-N}}{3}\right)^{2^N} \cdot \frac{1}{2}\bigg[\big(2(1-\gamma)\big)^2\cdot (1+ \gamma)(2^N+\alpha^{-1})\left(\frac{1+\alpha}{\alpha}\right)^{2^N} \\
 &\qquad \qquad \qquad \qquad +\big(2(1+\gamma)\big)^2\cdot (1- \gamma)(2^N+\alpha^{-1})\left(\frac{1+\alpha}{\alpha}\right)^{2^N} \bigg]\\
 &=4\left(\frac{1-4^{-N}}{3}\right)^{2^N} \left(\frac{1+\alpha}{\alpha}\right)^{2^N} (2^N+\alpha^{-1})(1-\gamma^2)\gamma \leq 8,
\end{align*}
since $\gamma\leq 1$, $(1-\gamma^2)(2^N+\alpha^{-1})=\alpha^{-1}\leq 2$ and
$$ \left(\frac{1-4^{-N}}{3}\right)^{2^N} \left(\frac{1+\alpha}{\alpha}\right)^{2^N}\leq \left(\frac{1+\alpha}{3\alpha}\right)^{2^N}\leq 1$$
(again, the latter bound is due to $\alpha> 1/2$). Therefore, we have proved that $ \E f^2w\leq 8\cdot {2^N}$, which is precisely the claim.
\end{proof}

\begin{lemma}
We have $w(S^2(f)\geq N\cdot 2^{N-2})\geq \frac{1}{12}e^{-4}\cdot 2^N$ for sufficiently large $N$.
\end{lemma}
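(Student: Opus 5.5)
The plan is to restrict attention to the set $J_{2^N}$ of points where the construction was extended the full $2^N$ times. On this set both the weight and the square function are essentially explicit, so it suffices to show two things: that $w(J_{2^N})$ is of order $e^{-4}2^N$, and that, conditionally on $J_{2^N}$, the event $\{S^2(f)\geq N\cdot 2^{N-2}\}$ has probability close to $1$.

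\emph{Step 1: the weight of $J_{2^N}$.} By \eqref{uv} and the iteration, $\mathbb{P}(J_{2^N})=\big(\frac{1-4^{-N}}{3}\big)^{2^N}$. Before Step 4, $w$ is constant on $J_{2^N}$, equal to $(2^N+\alpha^{-1})\big(\frac{1+\alpha}{\alpha}\big)^{2^N}$. The final $\pm\gamma$ split has average factor $1$, so on every interval $I_j$ from Step 4 we have $w(I_j)=|I_j|(2^N+\alpha^{-1})\big(\frac{1+\alpha}{\alpha}\big)^{2^N}$. Hence
\[
w(J_{2^N})=(2^N+\alpha^{-1})\Big(\frac{1+\alpha}{3\alpha}\Big)^{2^N}(1-4^{-N})^{2^N}.
\]
Using $\alpha<1/2+3\cdot 2^{-N}$ we get $\alpha^{-1}>2(1-6\cdot 2^{-N})$, and therefore $\frac{1+\alpha}{3\alpha}=\frac13+\frac{1}{3\alpha}\geq 1-4\cdot 2^{-N}$. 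Consequently $\big(\frac{1+\alpha}{3\alpha}\big)^{2^N}\geq e^{-4}(1-o(1))$. Together with $(1-4^{-N})^{2^N}\to 1$, this gives $w(J_{2^N})\geq e^{-4}2^N(1-o(1))$.

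\emph{Step 2: the square function on $J_{2^N}$.} The events we look at will be unions of the intervals $I_j$ appearing before Step 4. On each such interval $w$ has the same average, so the $w$-measure of such an event equals $w(J_{2^N})$ times its conditional probability given $J_{2^N}$. In the $k$-th block ($k=1,\dots,2^N$) the function $f$ follows a copy of $\varphi^{I}$; the multiplicative rescalings act on $(u,v)$ only, not on $\varphi$. Hence, if $\xi_k$ denotes the variable $\xi$ of that block, $S^2(f)\geq\sum_{k=1}^{2^N}\xi_k$. Conditionally on $J_{2^N}$, the $\xi_k$ are i.i.d. with the law \eqref{cond_dist}, because each block restarts on disjoint dyadic intervals with the same conditional distribution. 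The mean of $\xi_k$ is about $3N/4$, but its variance is of order $4^N$, which is too large for a direct Chebyshev bound. This heavy tail is the main obstacle.

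\emph{Step 3: truncation.} To handle the tail, I will truncate: let $\xi_k'=\xi_k\chi_{\{\xi_k\leq 4^{M}\}}$ with $M=\lfloor N/2\rfloor$. Then $\mathbb{E}\xi_k'\geq \frac{3M}{4}\geq \frac{3N}{8}-1$, while $\mathbb{E}(\xi_k')^2\leq \sum_{n\leq M+1}3\cdot 4^{n-2}\lesssim 4^{M}\leq 2^N$. For large $N$, the sum $\sum_k\xi_k'$ has mean at least $(\frac38 N-1)2^N\geq\frac{5}{16}N2^N$ and variance $\lesssim 2^N\cdot 2^N=4^N$. Chebyshev then gives
\[
\mathbb{P}\Big(\sum_k\xi_k'<N2^{N-2}\,\Big|\,J_{2^N}\Big)\lesssim \frac{4^N}{(N2^N/16)^2}=O(N^{-2}).
\]
So the conditional probability of $\{S^2(f)\geq N2^{N-2}\}$ is at least $1-O(N^{-2})$.

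Combining Steps 1 and 3 gives $w(S^2(f)\geq N2^{N-2})\geq e^{-4}2^N(1-o(1))$, which exceeds $\frac1{12}e^{-4}2^N$ for sufficiently large $N$. The constant $1/12$ leaves ample room for the lower-order corrections. The only points to check carefully are the independence of the blocks (from the dyadic restart structure) and the elementary estimate on $\frac{1+\alpha}{3\alpha}$.
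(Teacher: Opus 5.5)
Your argument is correct and follows the paper's skeleton. You restrict to $J_{2^N}$, use that $w$ has the same average on every interval present before the final $\pm\gamma$ move, and dominate $S^2(f)$ from below by $2^N$ conditionally i.i.d. copies of $\xi$. Your event $\{\sum_k\xi_k\ge N2^{N-2}\}$ plays the role of the paper's truncated square function $\tilde S(f)$.

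The genuine difference is the probabilistic core. The paper shows $\eta_N=(\xi_1+\dots+\xi_{2^N})/(N2^N)\to 3/8$ in distribution using characteristic functions. It splits at $n\approx N/2+\sqrt{\ln N}$, proves the heavy tail contributes only $o(2^{-N})$ to $\psi_{\xi_1}(t/(N2^N))$, and invokes L\'evy's continuity theorem, but then uses only $\mathbb{P}(\eta_N\ge 1/4)\ge 1/3$. Your one-sided truncation $\xi_k\ge\xi_k\chi_{\{\xi_k\le 4^{\lfloor N/2\rfloor}\}}$ plus Chebyshev is more elementary. Since only a lower bound on the sum is needed, discarding large values costs nothing, the tail never has to be estimated, and you get the quantitative bound $1-O(N^{-2})$. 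Your truncated mean $\frac34(\lfloor N/2\rfloor+1)\approx\frac38N$ reproduces the paper's limit. What the paper's route adds is the two-sided fact that $N2^N$ is the true order of $\sum_k\xi_k$, which the lemma does not need.

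Your stronger conditional bound also absorbs a bookkeeping slip you copied from the paper. Set $r=(1+\alpha)/\alpha$ and $p=(1-4^{-N})/3$. Block $m$ of the construction is rescaled by $r^{m-1}$. So after $m$ blocks have all continued, the surviving set has probability $p^m$ and $w=(2^N+\alpha^{-1})r^{m-1}$ there.

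Hence the set on which Step 4 acts, where $w$ averages to $(2^N+\alpha^{-1})r^{2^N}$, has probability $p^{2^N+1}$, not $p^{2^N}$. Its $w$-measure is $(2^N+\alpha^{-1})\,p\,(rp)^{2^N}\approx\frac13e^{-4}2^N$, not $e^{-4}2^N$ as in your Step 1. Any reading of $J_{2^N}$ that makes $\mathbb{P}(J_{2^N})=p^{2^N}$ instead loses the constancy of $w$ on it.

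With your estimate this still gives $w(S^2(f)\ge N2^{N-2})\ge\frac13e^{-4}2^N(1-o(1))>\frac1{12}e^{-4}2^N$, so your proof stands once Step 1 is corrected. The paper's chain, with factors $\frac13\cdot\frac12\cdot\frac{e^{-4}}{2}$, has no room for the extra factor $p$. To reach $\frac1{12}$ it would need $\mathbb{P}(\eta_N\ge 1/4)\to1$ together with sharper limits for the other two factors.
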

\begin{proof}
Let us ignore the final move of the martingale $f$ on the set $J_{2^N}$, described in Step 4 above; denoting the corresponding ``truncated'' square function by $\tilde{S}(f)$, we have $S(f)\geq \tilde{S}(f)$ and therefore
\begin{align*}
 w(S^2(f)\geq N\cdot 2^{N-2})&\geq w\big(\{\tilde{S}^2(f)\geq N\cdot 2^{N-2}\}\cap J_{2^N}\big)\\
 &=(2^N+\alpha^{-1})\left(\frac{1+\alpha}{\alpha}\right)^{{2^N}}\mathbb{P}\big(\{\tilde{S}^2(f)\geq N\cdot 2^{N-2}\}\cap J_{2^N}\big)\\
 &>2^N\left(\frac{1+\alpha}{\alpha}\right)^{2^N}\mathbb{P}(J_{2^N})\mathbb{P}\big(\tilde{S}^2(f)\geq N\cdot 2^{N-2}| J_{2^N}\big)\\
 &=2^N\left(\frac{1+\alpha}{\alpha}\right)^{2^N}\left(\frac{1-4^{-N}}{3}\right)^{2^N}\mathbb{P}\big(\tilde{S}^2(f)\geq N\cdot 2^{N-2}| J_{2^N}\big).
\end{align*}
It will be proved in Lemma \ref{prob} below that for sufficiently large $N$ we have 
\begin{equation}\label{limit}
 \mathbb{P}(\tilde{S}^2(f)\geq N2^{N-2}|J_{2^N})\geq \frac{1}{3}.
\end{equation}
Assuming the validity of this estimate, we thus obtain
$$ w(S^2(f)\geq N2^{N-2})\geq \frac{2^N}{3} (1-4^{-N})^{2^N}\cdot \left(\frac{1+\alpha}{3\alpha}\right)^{2^N}.$$
Now, if $N$ is large enough, we have
$$ (1-4^{-N})^{2^N}\geq \frac{1}{2}.$$
Furthermore, recalling that $1/2<\alpha<1/2+3\cdot 2^{-N}$, we may write
$$ \left(\frac{1+\alpha}{3\alpha}\right)^{2^N}=\left(1+\frac{1-2\alpha}{3\alpha}\right)^{2^N}\geq \left(1-\frac{2^{1-N}}{\alpha}\right)^{2^N}\geq (1-2^{2-N})^{2^N}$$
and the latter quantity is bigger than $e^{-4}/2$ for sufficiently large $N$. Putting all the above facts together, we get the claim.
\end{proof}

The three lemmas above yield our main result.

\begin{proof}[Proof of Theorem \ref{mainthm}]
This is immediate: we have $[w]_{A_2}\leq 6\cdot 2^N$ and $\ln(1+ [w]_{A_2})< N$ (for large $N$) and hence
$$ \frac{\|S(f)\|_{L^{2,\infty}(w)}}{\|f\|_{L^2(w)}}\geq \frac{\big(N2^{N-2} w(S^2(f)\geq N2^{N-2})\big)^{1/2}}{\|f\|_{L^2(w)}}> \frac{e^{-2}}{48}\big([w]_{A_2}\ln (1+[w]_{A_2})\big)^{1/2}.$$
The proof is complete.
\end{proof}

It remains to establish the final ingredient.

\begin{lemma}\label{prob}
The inequality \eqref{limit} holds.
\end{lemma}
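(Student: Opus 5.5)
Conditionally on $J_{2^N}$, the truncated square function dominates a sum of $2^N$ independent copies of $\xi$, so the claim reduces to a lower-tail estimate for such a sum.

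\emph{Decomposition.} On $J_{2^N}$ the martingale $f$ passes through $2^N$ consecutive building blocks, the $k$-th of which is a copy of $\varphi^{I}$ on some interval $I$. Within each block the trajectory lies in $\{u^I_\infty v^I_\infty\neq 1\}$, since otherwise the process would have stopped. The rescaling in Steps 2--3 only affects the $(w,\sigma)$ coordinates, so the increments of $f$ inside the $k$-th block are exactly the increments of a copy of $\varphi$. Therefore
$$\tilde S^2(f)\geq \sum_{k=1}^{2^N}\xi^{(k)},$$
where $\xi^{(k)}$ is the variable $\xi$ associated with the $k$-th block.

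\emph{Independence.} By the construction, given that the process enters block $k$ on an interval $I_j$, the conditional law of block $k$ is that of $\varphi^{I_j}$. It does not depend on the past. Hence, conditionally on $J_{2^N}$, the variables $\xi^{(1)},\dots,\xi^{(2^N)}$ are i.i.d., each with law \eqref{cond_dist}. Formally, $\mathbb P(J_{2^N})=p^{2^N}$ with $p=(1-4^{-N})/3$, and the joint conditional law factorizes into a product of the conditional laws given $\{u_\infty v_\infty\neq1\}$.

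\emph{Moments of one copy.} By \eqref{cond_dist}, $\xi$ equals $4^{n-1}$ with conditional probability $c\,4^{-n}$, where $c=3/(1-4^{-N})$. This gives
$$\E(\xi\mid\cdot)=\frac{cN}{4}\approx\frac{3N}{4},\qquad \E(\xi^2\mid\cdot)=\frac{c}{16}\sum_{n=1}^N4^{n}\approx \frac{4^N}{4}.$$
Set $\Sigma=\sum_k\xi^{(k)}$. Then $\E\Sigma\approx \tfrac34 N2^N$, which is three times the threshold $N2^{N-2}$.

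\emph{Lower tail.} The main obstacle is that $\xi$ is heavy-tailed. We have $\operatorname{Var}\Sigma\approx 2^N\cdot 4^N/4$, so $\sqrt{\operatorname{Var}\Sigma}\approx 2^{3N/2}/2$. This is much larger than the mean, so plain Chebyshev fails. The fix is truncation: the lower tail is what matters, and truncating only decreases $\Sigma$. Put $\eta=\min(\xi,2^N)$, which keeps the terms with $4^{n-1}\le 2^N$, that is, $n\le N/2+1$. Then
$$\E\eta\geq \frac{c}{4}\Big(\frac N2\Big)\approx\frac{3N}{8},\qquad \E\eta^2\leq \frac{c}{16}\sum_{n\le N/2+1}4^n+2^{2N}\,\mathbb P\big(\xi>2^N\big)\lesssim 2^N.$$
For $T=\sum_k\eta^{(k)}$ this gives $\E T\approx \tfrac{3}{8}N2^N=\tfrac32 N2^{N-2}$ and $\operatorname{Var}T\lesssim 4^N$. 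Chebyshev then yields
$$\mathbb P\big(T<N2^{N-2}\mid J_{2^N}\big)\leq \frac{\operatorname{Var}T}{(\tfrac12 N2^{N-2})^2}=O(N^{-2})\to0.$$
Since $\Sigma\ge T$, this gives $\mathbb P\big(\tilde S^2(f)\ge N2^{N-2}\mid J_{2^N}\big)\to1$, and in particular the bound $\ge 1/3$ holds for large $N$.

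The remaining work is to check the constants $c\to3$ and the truncation bookkeeping carefully.
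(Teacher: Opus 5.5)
Your proof is correct. It uses the paper's reduction to $\mathbb{P}(\xi_1+\dots+\xi_{2^N}\ge N2^{N-2})$ for i.i.d.\ copies of $\xi$ with law \eqref{cond_dist}, and the independence is justified at the same level of rigor as in the paper. The difference lies in how this lower-tail probability is controlled.

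The paper shows that $(\xi_1+\dots+\xi_{2^N})/(N2^N)$ converges in distribution to the constant $3/8$, using L\'evy's continuity theorem. It splits $\psi_{\xi_1}(t/(N2^N))$ at $n\approx N/2+\sqrt{\ln N}$ and shows this equals $1+r_N(t)2^{-N}$ with $r_N(t)\to 3it/8$.

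You instead run the truncation proof of the weak law of large numbers. Cutting $\xi$ at $2^N$ (i.e.\ at $n\le N/2+1$, essentially the paper's cut-off) keeps a mean of at least $cN/8\ge 3N/8$ per summand while reducing the second moment to $O(2^N)$. Chebyshev then gives $\mathbb{P}(T<N2^{N-2})=O(N^{-2})$. Your bounds $\mathbb{E}\eta\ge cN/8$ and $\mathbb{E}\eta^2=O(2^N)$ check out.

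Your route is more elementary, since it needs no characteristic functions. It is also quantitative: you get an explicit rate, and hence an explicit range of admissible $N$. The paper's route identifies the exact limit $3/8$, which is more than \eqref{limit} requires.

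Both arguments rest on the same observation. The values $4^{n-1}$ with $n>N/2$ carry half of $\mathbb{E}\xi\approx 3N/4$. However, they occur so rarely among $2^N$ samples that their total contribution is, with high probability, $o(N2^N)$. This is why the paper's limit is $3/8$ rather than $3/4$, why plain Chebyshev on the untruncated sum fails, and why your truncated mean is the relevant quantity.
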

\begin{proof}
To study $\mathbb{P}(\tilde{S}^2(f)\geq N2^{N-2}|J_{2^N})$, let us start with a convenient lower bound for $\tilde{S}^2(f)$ on $J_{2^N}$. On this set, the process $(w,\sigma,f)$ was modified/extended $2^N$ times. Each iteration gives an \emph{independent} contribution to the size of the square function. Precisely, recall the discussion presented at the end of the previous subsection, and consider the sequence $\xi_1$, $\xi_2$, $\ldots$, $\xi_{2^N}$ of independent random variables with the identical distribution
$$ \mathbb{P}(\xi_k=4^{n-1})=4^{-n}\cdot \frac{3}{1-4^{-N}},\qquad n=1,\,2,\,\ldots,\,N.$$
Then we have
$$ \mathbb{P}(\tilde{S}^2(f)\geq N2^{N-2}|J_{2^N})\geq \mathbb{P}(\xi_1+\xi_2+\ldots+\xi_{2^N}\geq N2^{N-2})=\mathbb{P}\left(\eta_N\geq \frac{1}{4}\right),$$
where $\eta_N$ is the normalized sum
$$ \eta_N=\frac{\xi_1+\xi_2+\ldots+\xi_{2^N}}{N2^N}.$$
We will prove that $(\eta_N)_{N\geq 1}$ converges in distribution to $3/8$: this will imply that $\mathbb{P}(\eta_N\geq 1/4)\geq 1/3$ for sufficiently large $N$, which is precisely the claim. To prove the convergence, we will use L\'evy's theorem: we will show that for any $t\in \R$, the characteristic function $\psi_{\eta_N}(t)$ converges to $\exp(3it/8)$. To this end, note that
\begin{equation}\label{chpsi}
 \psi_{\eta_N}(t)=\left(\psi_{\xi_1}\left(\frac{t}{N2^N}\right)\right)^{2^N},
\end{equation}
where
$$ \psi_{\xi_1}(s)=\frac{3}{1-4^{-N}}\sum_{n=1}^N 4^{-n} \exp(4^{n-1}is)$$
is the characteristic function of $\xi_1$. We have the splitting
$$ \psi_{\xi_1}\left(\frac{t}{N2^N}\right)=I_1+I_2+I_3,$$
where
\begin{align*}
 I_1&=\frac{3}{1-4^{-N}}\sum_{n>N/2+\sqrt{\ln N}} 4^{-n}\exp\left(i\cdot \frac{4^{n-1}t}{N2^N}\right),\\
I_2&=\frac{3}{1-4^{-N}}\sum_{1\leq n\leq N/2+\sqrt{\ln N}} 4^{-n}\left[\exp\left(i\cdot \frac{4^{n-1}t}{N2^N}\right)-1-i\cdot \frac{4^{n-1}t}{N2^N}\right],\\
I_3&=\frac{3}{1-4^{-N}}\sum_{1\leq n\leq N/2+\sqrt{\ln N}} 4^{-n}\left(1+i\cdot \frac{4^{n-1}t}{N2^N}\right).
\end{align*}
Now, observe that 
$$ |I_1|\leq \frac{3}{1-4^{-N}}\sum_{n>N/2+\sqrt{\ln N}} 4^{-n}\leq \frac{4}{1-4^{-N}}\cdot 4^{-N/2-\sqrt{\ln N}}$$
and the last expression is of order $o(2^{-N})$ as $N\to \infty$. Next, using the elementary estimate $|e^{ix}-1-ix|\leq x^2$ for $x\in \R$, we get
$$ |I_2|\leq \frac{3}{1-4^{-N}}\sum_{1\leq n\leq N/2+\sqrt{\ln N}} 4^{-n}\cdot \left(\frac{4^{n-1}t}{N2^N}\right)^2\leq \frac{4^{N/2+\sqrt{\ln N}}-1}{1-4^{-N}}\cdot \frac{t^2}{N^2 4^N}$$ 
and, as before, the last quantity is of order $o(2^{-N})$ as $N\to \infty$. Finally, we have
\begin{align*}
 I_3&=\frac{1-4^{-\lfloor N/2+\sqrt{\ln N}\rfloor}}{1-4^{-N}}+\frac{3it}{4(1-4^{-N})}\cdot \frac{\lfloor N/2+\sqrt{\ln N}\rfloor}{N2^N}\\
 &=1+\frac{3it}{8(1-4^{-N})}\cdot \frac{1}{2^N}\\
&\qquad +\frac{4^{-N}-4^{-\lfloor N/2+\sqrt{\ln N}\rfloor}}{1-4^{-N}}+\frac{3it}{4(1-4^{-N})}\cdot \frac{\big(\lfloor N/2+\sqrt{\ln N}\rfloor-N/2\big)}{N2^N},
\end{align*}
and the last two summands are of order $o(2^{-N})$ as $N\to \infty$. Summarizing, we have shown that 
$$ \psi_{\xi_1}\left(\frac{t}{N2^N}\right)=1+\frac{r_N(t)}{2^N},$$
where $r_N(t)\to 3it/8$ as $N\to \infty$. Now the identity \eqref{chpsi} immediately gives $\psi_{\eta_N}(t)\to \exp(3it/8)$ and the claim follows.
\end{proof}

\section*{Artificial Intelligence Statement}
No AI tools were employed to obtain the above result.

\end{document}